\providecommand{\tabularnewline}{\\}
\numberwithin{equation}{section}
\numberwithin{figure}{section}
\theoremstyle{plain}
\newtheorem{thm}{\protect\theoremname}
\theoremstyle{plain}
\newtheorem{cor}[thm]{\protect\corollaryname}
\theoremstyle{definition}
\newtheorem{example}[thm]{\protect\examplename}
\providecommand{\corollaryname}{Corollary}
\providecommand{\examplename}{Example}
\providecommand{\theoremname}{Theorem}
\begin{document}
\title{Enhancing Markov and Chebyshev's inequalities}
\author{Joan del Castillo}
\address{Departament de Matemàtiques, Universitat Autònoma de Barcelona, Cerdanyola
del Vallès, Spain.}
\email{joan.delcastillo@uab.cat}
\keywords{Restricted mean, Inequalities in Probability, Concentration Inequalities.}
\begin{abstract}
The idea of the restricted mean has been used to establish a significantly
improved version of Markov's inequality that does not require any
new assumptions. The result immediately extends on Chebyshev's inequalities
and Chernoff's bound. The improved Markov inequality yields a bound
that is hundreds or thousands of times more accurate than the original
Markov bound for high quantiles in the most prevalent and diverse
situations. 

The Markov inequality benefits from being model-independent, and the
long-standing issue of its imprecision is solved. Practically speaking,
avoidance of model risk is decisive when multiple competing models
are present in a real-world situation. 
\end{abstract}

\maketitle

\section{Introduction}

Historically, Markov and Chebyshev's inequalities, which provide proof
of the weak law of large numbers, date back to the articles by \citet{tchb67}
and \citet{mark84}. Today's most basic version of this inequality
carries Markov's name. Chebyshev was undoubtedly aware of it as well,
and Markov was a student of his. These inequalities have survived
to our day in most undergraduate probability textbooks, for instance,
\citet{ross14} and \citet{case02}.

Markov's inequality has the benefit of being model-independent from
a practical standpoint. When numerous competing models are provided
in relation to a real-world problem, the elimination of model risk
is crucial. The first problem with inequality is its lack of accuracy.
Recently, several researchers have taken an interest in the idea of
updating these classic inequalities in relation to current problems,
see \citet{saty14}, \citet{cohe15}, \citet{hube19}, \citet{ogas20},
\citet{ren21}, \citet{sola22}, \citet{bhat22}, \citet{choi23}.

Theorem \ref{thm:1} provides an enhanced Markov inequality that is
hundreds or thousands of times more accurate than the original for
high quantiles and does not require any new assumptions, see Figure
\ref{fig:F1} and Table \ref{tab:T1}. The result immediately extends
Chebyshev's inequalities and Chernoff's bound \citep{cher52}, see
Corollary \ref{cor C2}. The proof is practically as simple as that
of the traditional Markov inequality, using the concept of restricted
expectation. This idea is used, for example, in the Lorenz curve \citep{lore05},
which measures how wealth is distributed in a society, see \citet{arno15},
and in medicine as the restricted mean survival time, see \citet{roys13}. 

\section{\label{sec:Improving-on-the S1}Improving the accuracy of Markov's
inequality}

According to Markov's inequality, see \citet{ross14}, the probability
of a nonnegative random variable $X$ being at least $\nu>0$ is at
most the expectation of $X$ divided by $\nu$. That is 
\begin{equation}
\Pr\left\{ X>\nu\right\} \leq E\left(X\right)/\nu.\label{eq:Mark}
\end{equation}
Chebyshev's inequality, an extended version for a nondecreasing nonnegative
function, $\varphi$, is as follows 
\[
\Pr\left\{ X>\nu\right\} \leq E(\varphi\left(X\right))/\varphi\left(\nu\right),
\]
The proof of this extended version is as simple as applying Markov
(\ref{eq:Mark}) to the second part of the equality: 
\begin{equation}
\Pr\left\{ X>\nu\right\} =\Pr\left\{ \varphi\left(X\right)>\varphi\left(\nu\right)\right\} .\label{eq:fi}
\end{equation}
The proof of the weak law of large numbers follows from Chebyshev's
inequality for the variable$\left|X-E\left(X\right)\right|$ and $\varphi\left(x\right)=x^{2}$,
when $X$ has a finite variance, see \citet{case02}. Other immediate
inequalities follow using higher moments, $\varphi\left(X\right)=X^{k}$,
and using the moment-generating function, $\varphi\left(X\right)=exp\left(tX\right)$,
which led to Chernoff's bound (1952).

A typical proof of Markov's inequality makes use of the indicator
random variable of the event $\left\{ X>\nu\right\} $ 
\[
1_{\left\{ X>\nu\right\} }=\left\{ \begin{array}{cc}
0,\;when & X\leq\nu\\
1,\;when & X>\nu
\end{array}\right.
\]
and the inequality 
\begin{equation}
\nu1_{\left\{ X>\nu\right\} }\leq X\label{eq:indic}
\end{equation}
which is clear if we consider the two possible values of \emph{X.}
If $0\leq X\leq\nu$, $\nu1_{\left\{ X>\nu\right\} }=0$ and if $X>\nu$,
$\nu1_{\left\{ X>\nu\right\} }=\nu$. Then, taking the expectation
of both sides of the inequality (\ref{eq:indic}), Markov inequality
(\ref{eq:Mark}) follows, since $E\left(1_{\left\{ X>\nu\right\} }\right)=\Pr\left\{ X>\nu\right\} $
and expectation is a monotonically increasing operator.

Let $X$ be a nonnegative random variable with cumulative distribution
function $F\left(x\right)$. For clarity, it is now assumed that $X$
has a probability density function $f\left(x\right)$, so its expectation
is expressed as 
\[
E\left(X\right)=\int_{0}^{\infty}x\,f(x)\,dx,
\]

For $\nu\geq0$, the \emph{restricted expectation }of $X$ over $\nu$
is introduced as follows 
\begin{equation}
E_{\nu}\left(X\right)=E\left(X1_{\left\{ X>\nu\right\} }\right)=\int_{\nu}^{\infty}x\,f(x)\,dx.\label{eq:restrE}
\end{equation}
Naturally, if $\nu=0$, $E_{0}\left(X\right)=E\left(X\right)$. This
concept is employed, for instance, in the Lorenz curve \citep{arno15},
and in medicine as the restricted mean survival time \citep{roys13}.

The following is the paper's primary finding. 
\begin{thm}
\label{thm:1}If $X$ is a nonnegative random variable and $\nu>0$,
then the probability of $X$ being at least $\nu$ is at most the
restricted expectation of X over $\nu$ divided by $\nu$. This improves
on Markov's inequality. 
\[
\Pr\left\{ X>\nu\right\} \leq E_{\nu}\left(X\right)/\nu\leq E\left(X\right)/\nu.
\]
\end{thm}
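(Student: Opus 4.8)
The plan is to follow the classical indicator-function proof of Markov's inequality almost verbatim, changing only a single term so as to obtain the sharper conclusion. The engine of the classical argument is the pointwise bound $\nu1_{\left\{ X>\nu\right\} }\leq X$ recorded in (\ref{eq:indic}). My key observation is that the right-hand side can itself be restricted to the event $\left\{ X>\nu\right\} $ without destroying the inequality, i.e.
\[
\nu1_{\left\{ X>\nu\right\} }\leq X\,1_{\left\{ X>\nu\right\} }.
\]
This remains true pointwise: on $\left\{ X\leq\nu\right\} $ both sides equal $0$, and on $\left\{ X>\nu\right\} $ it reduces to $\nu\leq X$, which is precisely the defining condition of the event. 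This refinement is the only creative ingredient; everything else is routine.

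First I would take expectations of both sides, invoking the monotonicity of expectation together with $E\left(1_{\left\{ X>\nu\right\} }\right)=\Pr\left\{ X>\nu\right\} $ and the definition of the restricted expectation in (\ref{eq:restrE}). This yields $\nu\Pr\left\{ X>\nu\right\} \leq E\left(X1_{\left\{ X>\nu\right\} }\right)=E_{\nu}\left(X\right)$, and dividing by $\nu>0$ gives the first inequality $\Pr\left\{ X>\nu\right\} \leq E_{\nu}\left(X\right)/\nu$.

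For the second inequality I would use that $X$ is nonnegative and $1_{\left\{ X>\nu\right\} }\leq1$, so that $X1_{\left\{ X>\nu\right\} }\leq X$ pointwise; taking expectations gives $E_{\nu}\left(X\right)\leq E\left(X\right)$, and dividing by $\nu$ completes the chain. There is essentially no analytic obstacle here: the whole content is recognizing the sharpened pointwise bound, and the improvement over Markov is exactly the discarded mass $E\left(X\right)-E_{\nu}\left(X\right)=E\left(X1_{\left\{ X\leq\nu\right\} }\right)\geq0$, which is substantial precisely for the high quantiles where the classical bound is weakest.
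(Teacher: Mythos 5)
Your proposal is correct and follows essentially the same route as the paper: both rest on the sharpened pointwise bound $\nu1_{\left\{ X>\nu\right\} }\leq X1_{\left\{ X>\nu\right\} }$ (the paper obtains it by multiplying (\ref{eq:indic}) by the idempotent indicator, you by a direct case analysis, which is an immaterial difference) and then take expectations, finishing the second inequality from $X1_{\left\{ X>\nu\right\} }\leq X$ exactly as the paper does.
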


\begin{proof}
Multiplying both sides of the inequality (\ref{eq:indic}) by the
nonnegative random variable $1_{\left\{ X>\nu\right\} }$, bearing
in mind that $\left(1_{\left\{ X>\nu\right\} }\right)^{2}=1_{\left\{ X>\nu\right\} }$,
the following inequality follows 
\begin{equation}
\nu1_{\left\{ X>\nu\right\} }\leq X1_{\left\{ X>\nu\right\} },\label{eq:ine2}
\end{equation}
Then taking the expectation of both sides of the above inequality,
the main part of the result follows. Moreover, it is clear that $X1_{\left\{ X>\nu\right\} }\leq X$
and taking the expectation on both sides it is demonstrated that the
new inequality improves on Markov's inequality.

Note that with a measure-theoretic definition of expectation, the
probability density function is not necessary and the result is as
general as traditional Markov inequality. In particular for discrete
random variables, this conclusion is true. 
\end{proof}
The three terms of the inequalities of Theorem \ref{thm:1} will henceforth
be called the tail function\emph{, }$\Pr\left\{ X>\nu\right\} =(1-F(x))=\overline{F}\left(x\right)$\emph{,}
traditional Markov bound, $E\left(X\right)/\nu$, and enhanced Markov
bound, $E_{\nu}\left(X\right)/\nu$. Enhanced Markov inequality will
from now on be used to refer to the first inequality of Theorem \ref{thm:1}.

Theorem \ref{thm:1} for Markov's inequality instantly generalizes
Chebyshev and Chernoff's inequalities under the restricted expectation,
as the following result shows. 
\begin{cor}
\label{cor C2}If $X$ is a nonnegative random variable and $\nu>0$,
then for a nondecreasing nonnegative function $\varphi$ 
\[
\Pr\left\{ X>\nu\right\} \leq E_{\nu}(\varphi\left(X\right))/\varphi\left(\nu\right)\leq E(\varphi\left(X\right))/\varphi\left(\nu\right),
\]
\end{cor}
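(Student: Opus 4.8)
The plan is to mirror the proof of Theorem~\ref{thm:1}, but applied to the transformed variable $\varphi\left(X\right)$ rather than to $X$ itself. The essential observation is that the event whose probability we wish to bound can be rewritten using the monotonicity of $\varphi$, exactly as in equation~(\ref{eq:fi}): since $\varphi$ is nondecreasing, $\left\{ X>\nu\right\} \subseteq\left\{ \varphi\left(X\right)\geq\varphi\left(\nu\right)\right\} $, so that $\Pr\left\{ X>\nu\right\} \leq\Pr\left\{ \varphi\left(X\right)\geq\varphi\left(\nu\right)\right\} $. This reduces the corollary to an application of the enhanced Markov inequality to the nonnegative random variable $\varphi\left(X\right)$ at the threshold $\varphi\left(\nu\right)$.

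First I would set $Y=\varphi\left(X\right)$ and $\mu=\varphi\left(\nu\right)$, noting that $Y$ is nonnegative because $\varphi$ is a nonnegative function, and that $\mu\geq0$ for the same reason. Applying Theorem~\ref{thm:1} to $Y$ over the threshold $\mu$ yields
\[
\Pr\left\{ Y>\mu\right\} \leq E_{\mu}\left(Y\right)/\mu\leq E\left(Y\right)/\mu.
\]
Second, I would translate the restricted expectation back into the notation of the corollary. By definition~(\ref{eq:restrE}), $E_{\mu}\left(Y\right)=E\left(Y1_{\left\{ Y>\mu\right\} }\right)$, and since $\left\{ \varphi\left(X\right)>\varphi\left(\nu\right)\right\} $ is precisely the event $\left\{ Y>\mu\right\} $, this is exactly the quantity the corollary denotes $E_{\nu}\left(\varphi\left(X\right)\right)$, interpreted as the restricted expectation of $\varphi\left(X\right)$ taken over the threshold $\nu$ on the original variable. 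Combining with the monotonicity step gives the desired chain $\Pr\left\{ X>\nu\right\} \leq E_{\nu}\left(\varphi\left(X\right)\right)/\varphi\left(\nu\right)\leq E\left(\varphi\left(X\right)\right)/\varphi\left(\nu\right)$.

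The main obstacle, and the point deserving the most care, is the interface between the strict inequality $\left\{ X>\nu\right\} $ and the possibly non-strict event produced by applying $\varphi$, together with the matching convention for the restricted expectation. Because $\varphi$ need only be nondecreasing, it may be flat near $\nu$, so $\left\{ \varphi\left(X\right)>\varphi\left(\nu\right)\right\} $ can be strictly smaller than $\left\{ X>\nu\right\} $; one must therefore be careful that the notation $E_{\nu}\left(\varphi\left(X\right)\right)$ in the corollary refers to integrating $\varphi\left(X\right)$ over the region $\left\{ X>\nu\right\} $, which dominates $E_{\varphi\left(\nu\right)}\left(\varphi\left(X\right)\right)$ and hence preserves the inequality. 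I expect this bookkeeping to be the only delicate step; once the events and restricted expectations are aligned, the result follows immediately from Theorem~\ref{thm:1} with no further computation.
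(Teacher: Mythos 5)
Your overall route is the same as the paper's: reduce the corollary to Theorem \ref{thm:1} applied to the nonnegative variable $\varphi(X)$, using the monotonicity of $\varphi$ to relate the event $\{X>\nu\}$ to an event expressed in terms of $\varphi(X)$ and $\varphi(\nu)$. The paper simply says ``apply Theorem \ref{thm:1} to $\varphi(X)$ in the second part of equality (\ref{eq:fi})'', treating (\ref{eq:fi}) as an equality; you are right to flag that for a merely nondecreasing $\varphi$ this is not an equality, and that the correct inclusion runs $\{X>\nu\}\subseteq\{\varphi(X)\geq\varphi(\nu)\}$ with a non-strict inequality. That observation is to your credit.

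However, the way you close the resulting gap does not work as written. Your monotonicity step bounds $\Pr\{X>\nu\}$ by $\Pr\{\varphi(X)\geq\varphi(\nu)\}$, a non-strict event, while Theorem \ref{thm:1} applied to $Y=\varphi(X)$ at level $\mu=\varphi(\nu)$ bounds only the strict event $\Pr\{Y>\mu\}$. These two statements do not chain: you obtain an upper bound for $\Pr\{Y>\mu\}$ and a lower bound $\Pr\{Y\geq\mu\}\geq\Pr\{X>\nu\}$, with no inequality connecting $\Pr\{Y\geq\mu\}$ to $E_{\mu}(Y)/\mu$. Your remark that $E(\varphi(X)1_{\{X>\nu\}})$ dominates $E(\varphi(X)1_{\{\varphi(X)>\varphi(\nu)\}})$ is true, but it concerns the numerator, not the event whose probability is being bounded, so it does not repair the chain. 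The repair is to do what your opening sentence promises: mirror the \emph{proof} of Theorem \ref{thm:1} rather than invoke its statement. On the event $\{X>\nu\}$ one has $\varphi(X)\geq\varphi(\nu)$ because $\varphi$ is nondecreasing, hence pointwise
\[
\varphi(\nu)\,1_{\{X>\nu\}}\leq\varphi(X)\,1_{\{X>\nu\}}\leq\varphi(X),
\]
and taking expectations gives $\varphi(\nu)\Pr\{X>\nu\}\leq E_{\nu}(\varphi(X))\leq E(\varphi(X))$ with $E_{\nu}(\varphi(X))=E(\varphi(X)1_{\{X>\nu\}})$, which is exactly the corollary. This one-line argument also settles the interpretive question you raise: $E_{\nu}(\varphi(X))$ must mean the integral over $\{X>\nu\}$ (as the paper's later formulas such as (\ref{eq:momM}) confirm), not over $\{\varphi(X)>\varphi(\nu)\}$.
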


\begin{proof}
We simply need to apply Theorem \ref{thm:1} to the variable $\varphi\left(X\right)$
in the second part of equality (\ref{eq:fi}). 
\end{proof}
For instance, using higher moments, with $\varphi\left(X\right)=X^{k}$,
the previous Corollary leads to the following conclusion. 
\begin{equation}
\Pr\left\{ X>\nu\right\} \leq E_{\nu}(X^{k})/\nu^{k}\leq E(X^{k})/\nu^{k},\label{eq:momM}
\end{equation}
Using the moment generating function, with $\varphi\left(X\right)=exp\left(tX\right)$,
the Chernoff bound is also improved 
\begin{equation}
\Pr\left\{ X>\nu\right\} \leq E_{\nu}(exp\left(tX\right))e^{-t\nu}\leq E(exp\left(tX\right))e^{-t\nu}\label{eq:Chern}
\end{equation}

Below, the results of Theorem \ref{thm:1} will be checked in different
examples to show the extraordinary improvement they represent. It
will also be shown how the restricted expectation can be computed
just as easily as the general expectation. We believe that the large
increase in accuracy will lead to significant practical applications.

\section{Examples}

In order to confirm the assertion expressed in the Introduction, we
offer some examples to go along with the Section \ref{sec:Improving-on-the S1}
results. For the half-normal and exponential distributions across
high quantiles, we will compare the enhanced and traditional Markov
bounds. 

In general, for a sample of size $n$, we shall take into account
the $(1-1/n)$ quantile, which is typically a little below the sample
maximum. The improved Markov bound is often about $n$ times more
accurate than the traditional bound when we consider samples larger
than $100$.

Figure \ref{fig:F1} shows a preliminary idea of how the enhanced
Markov bound and the traditional Markov bound approximate the tail
function in the half-normal and exponential distributions with expected
value 1.

\begin{figure}
\caption{\label{fig:F1}The tail function (thick), the enhanced Markov bound
(thin) for half-normal (blue) and exponential (red) distributions,
both with expected value 1. The traditional Markov bound (dashed,
blue) is the same in both cases.}

\includegraphics{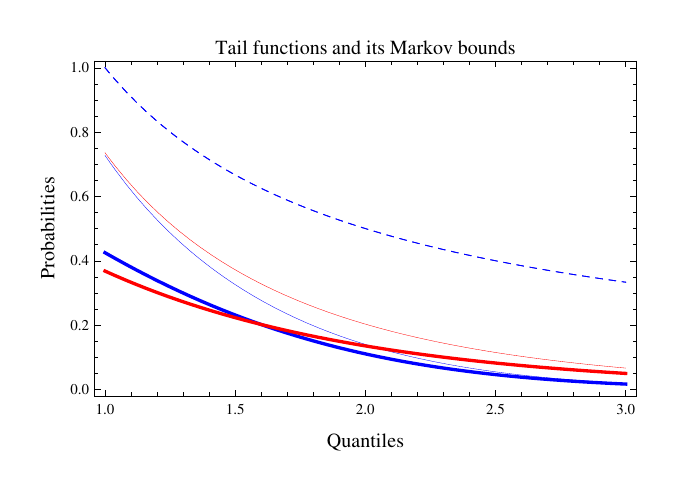} 
\end{figure}

\begin{example}
The probability density function of the standard exponential distribution,
$X$, is 
\[
f\left(x\right)=e^{-x},\;x\geq0.
\]
The cumulative distribution function is given by 
\[
F\left(x\right)=1-e^{-x},\;x\geq0.
\]
The restricted expectation is straightforward to compute using integration
by parts 
\[
E_{\nu}\left(X\right)=\int_{\nu}^{\infty}x\,e^{-x}\,dx=e^{-\nu}\left(1+\nu\right),
\]
and if $\nu=0$, the expectation is $E\left(X\right)=1$.

The $0.99$ and $0.999$ quantiles for $X$ are $4.605$ and $6.908$.
The three terms of the inequalities of Theorem \ref{thm:1} for $\nu=6.908$
are: the tail function $1.00\,10^{-3}$, the enhanced\emph{ }Markov
bound $1.14\,10^{-3}$ and the traditional Markov bound $0.145$.
Simply by looking at the orders of magnitude, the difference is obvious,
for it is about a thousand times better. See also Table \ref{tab:T1}. 
\end{example}

\begin{table}
\caption{\label{tab:T1}Comparison between the adjustments of the tail function
by the enhanced Markov bound, $E_{\nu}\left(X\right)/\nu$, and traditional
Markov bound, $E\left(X\right)/\nu$, for the half-normal and exponential
distributions with expected value $1$.}

\begin{tabular}{|c|c|c|c|c|c|c|}
\hline 
 & \multicolumn{3}{c|}{Half normal} & \multicolumn{3}{c|}{Exponential}\tabularnewline
\hline 
\hline 
$\nu$ & $1-F(\nu)$ & $E_{\nu}\left(X\right)/\nu$ & $E\left(X\right)/\nu$ & $1-F(\nu)$ & $E_{\nu}\left(X\right)/\nu$ & $E\left(X\right)/\nu$\tabularnewline
\hline 
1 & 0.425 & 0.727 & 1.000 & 0.368 & 0.736 & 1.000\tabularnewline
\hline 
2 & 0.111 & 0.140 & 0.500 & 0.135 & 0.203 & 0.500\tabularnewline
\hline 
3 & 0.017 & 0.019 & 0.333 & 0.050 & 0.066 & 0.333\tabularnewline
\hline 
4 & 1.4E-03 & 1.5E-03 & 0.250 & 0.018 & 0.023 & 0.250\tabularnewline
\hline 
5 & 6.6E-05 & 7.0E-05 & 0.200 & 6.7E-03 & 8.1E-03 & 0.200\tabularnewline
\hline 
6 & 1.7E-06 & 1.8E-06 & 0.167 & 2.5E-03 & 2.9E-03 & 0.167\tabularnewline
\hline 
7 & 2.3E-08 & 2.4E-08 & 0.143 & 9.1E-04 & 1.0E-03 & 0.143\tabularnewline
\hline 
8 & 1.7E-10 & 1.8E-10 & 0.125 & 3.4E-04 & 3.8E-04 & 0.125\tabularnewline
\hline 
\end{tabular}
\end{table}

If $X$ is normally distributed with zero mean, $N\left(0,\sigma^{2}\right)$,
then its absolute value$\left|X\right|$ follows a \emph{half-normal}
distribution with a probability density function 
\[
f_{H}\left(x;\sigma\right)=\frac{2}{\sqrt{2\pi}\sigma}exp\left(-\frac{x^{2}}{2\sigma^{2}}\right),\;x\geq0.
\]
The distribution is supported on the interval $[0,\infty)$. If $\Phi\left(x\right)$
is the cumulative distribution function of the standard normal distribution,
then the cumulative distribution function of the \emph{half-normal}
distribution is 
\[
F_{H}\left(x;\sigma\right)=2\Phi\left(x/\sigma\right)-1,\;x\geq0.
\]

\begin{example}
The restricted expectation of the half-normal distribution is given
by 
\[
E_{\nu}\left(\left|X\right|\right)=\frac{\sqrt{2}}{\sqrt{\pi}\sigma}\int_{\nu}^{\infty}x\,exp\left(-\frac{x^{2}}{2\sigma^{2}}\right)\,dx,
\]
since 
\[
-\sigma^{2}\frac{d}{dx}\left\{ exp\left(-\frac{x^{2}}{2\sigma^{2}}\right)\right\} =x\,exp\left(-\frac{x^{2}}{2\sigma^{2}}\right),
\]
then 
\[
E_{\nu}\left(\left|X\right|\right)=\sigma\sqrt{2/\pi}exp\left(-\frac{\nu^{2}}{2\sigma^{2}}\right),
\]
in particular $E\left(\left|X\right|\right)=\sigma\sqrt{2/\pi}$.

Then, a half-normal distribution $Z$ where $\sigma=\sqrt{\pi/2}$
has unit expectation, $E\left(Z\right)=1$, and its restricted expectation
is 
\[
E_{\nu}\left(Z\right)=exp\left(-\nu^{2}/\pi\right),
\]

The $0.99$ and $0.999$ quantiles for $Z$ are $3.228$ and $4.124$.
The three terms of the inequalities of Theorem \ref{thm:1} for $Z$
and $\nu=4.124$ are: the tail function $1.00\,10^{-3}$, the enhanced\emph{
}Markov bound $1.08\,10^{-3}$ and the traditional Markov bound $0.242$.
The new inequality is thousands times better, see also Table \ref{tab:T1}. 
\end{example}

Table \ref{tab:T1} compares the approximations of the tail function
by the enhanced and traditional Markov bounds for the half-normal
and exponential distributions with expected value $1$. The two functions
are evaluated up to $\nu=8$, which is far from the expected value.
The comparison between tail function fits does not depend on the scale
parameter. The Table shows that the enhanced Markov bound follows
the tail function very closely, unlike the traditional Markov bound.
The approximations of the enhanced Markov bound follow the tail function
in all orders of magnitude, up to $10^{-10},$ while the traditional
Markov is thousands of times less accurate in both cases for samples
larger than $10^{3}$.

\bibliography{markov}

\bibliographystyle{plainnat}
\end{document}